\documentclass[12pt]{article}
\usepackage{amssymb}
\usepackage{mathrsfs}
\usepackage{amsthm}
\usepackage{amsmath}
\usepackage{graphicx}
\usepackage{tikz}
\usepackage{lmodern}
\usepackage{enumerate}

\addtolength{\topmargin}{-.5in} \addtolength{\textheight}{1in}
\addtolength{\oddsidemargin}{-.6in}
\addtolength{\evensidemargin}{-.6in} \addtolength{\textwidth}{1.2in}
\usepackage{latexsym,amsmath,amssymb,amsfonts,epsfig,graphicx,cite,psfrag}
\usepackage{eepic,color,colordvi,amscd}

\newtheorem{thm}{Theorem}

\newtheorem{lem}[thm]{Lemma}
\newtheorem{coro}[thm]{Corollary}

\newcommand{\flo}[1]{{\left\lfloor{#1}\right\rfloor}}

\pagestyle{plain}

\begin{document}

\title{Cycles with consecutive odd lengths}
\author{Jie Ma\thanks{Email: jiemath@andrew.cmu.edu.}\\
Department of Mathematical Sciences, \\Carnegie Mellon University, Pittsburgh, PA 15213.}

\date{}

\maketitle

\begin{abstract}
It is proved that there exists an absolute constant $c>0$ such that for every natural number $k$,
every non-bipartite 2-connected graph with average degree at least $ck$ contains $k$ cycles with consecutive odd lengths.
This implies the existence of the absolute constant $d>0$ that every non-bipartite 2-connected graph
with minimum degree at least $dk$ contains cycles of all lengths modulo $k$,
thus providing an answer (in a strong form) to a question of Thomassen in \cite{Th83}.
Both results are sharp up to the constant factors.
\end{abstract}

\section{Introduction}
The research of cycles has been fundamental since the beginning of graph theory.
One of various problems on cycles which have been considered is the study of cycle lengths modulo a positive integer $k$.
Burr and Erd\H{o}s \cite{Erd76} conjectured that for every odd $k$, there exists a constanct $c_k$ such that
every graph with average degree at least $c_k$ contains cycles of all lengths modulo $k$.
In \cite{B77}, Bollob\'as resolved this conjecture by showing that $c_k= 2[(k+1)^k-1]/k$ suffices.
Thomassen \cite{Th83,Th88} strengthened the result of Bollob\'as by proving that for every $k$ (not necessarily odd),
every graph with minimum degree at least $4k(k+1)$ contains cycles of all even lengths modulo $k$,
which was improved to the bound $2k-1$ by Diwan \cite{D10}.
Note that in case $k$ is even, any integer congruent to $l$ modulo $k$ has the same parity with $l$,
and thus we can not expect that there are cycles of all lengths modulo $k$ in bipartite graphs (even with sufficient large minimum degree).
On the other hand, Thomassen \cite{Th83} showed that for every $k$ there exists a least natural number $f(k)$ such that
every non-bipartite 2-connected graph with minimum degree at least $f(k)$ contains cycles of all length modulo $k$.
Here the 2-connectivity condition can not be further improved, as one can easily construct a non-bipartite connected graph
with arbitrary large minimum degree but containing a unique (also arbitrary) odd cycle.
Thomassen \cite{Th83} remarked that the upper bound for $f(k)$ obtained by him is perhaps ``far too large'' and asked if $f(k)$ can be bounded above by a polynomial.

Bondy and Vince \cite{BV} resolved a conjecture of Erd\H{o}s by showing that every graph with minimum degree at least 3 contains two cycles whose lengths differ by one or two.
Verstra\"ete \cite{V00} proved that if graph $G$ has average degree at least $8k$ and even girth $g$ then there are $(g/2-1)k$ cycles of consecutive even lengths in $G$.
In an attempt to extend the result of Bondy and Vince, Fan \cite{Fan02} showed that every graph with minimum degree at least $3k-2$ contains $k$ cycles of consecutive even lengths or consecutive odd lengths.
Sudakov and Verstra\'ete proved \cite{SV08} that if graph $G$ has average degree $192(k+1)$ and girth $g$ then there are $k^{\flo{(g-1)/2}}$ cycles of consecutive even lengths in $G$,
strengthening the above results in the case that $k$ and $g$ are large.
It is natural to ask if one can pursue the analogous result for odd cycles.
In this paper, we show that this indeed is the case by the following theorem.

\begin{thm}\label{thm:main}
There exists an absolute constant $c>0$ such that for every natural number $k$,
every non-bipartite 2-connected graph $G$ with average degree at least $ck$ and girth $g$ contains at least $k^{\flo{(g-1)/2}}$ cycles of consecutive odd lengths.
\end{thm}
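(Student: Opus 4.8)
The plan is to decouple the problem into a \emph{counting} step that manufactures many paths of consecutive even lengths inside a dense region, and a \emph{parity} step that uses 2-connectivity together with a single odd cycle to close these paths into cycles of consecutive \emph{odd} lengths. First I would pass to a dense substructure: every graph of average degree at least $ck$ contains a subgraph $H$ of minimum degree at least $ck/2$, and $H$ inherits girth at least $g$. The non-bipartiteness and 2-connectivity of the host $G$ are kept in reserve, not imposed on $H$: non-bipartiteness supplies a shortest odd cycle $C_0$, and 2-connectivity will later be used to connect $H$ to $C_0$. I do not need $H$ itself to be 2-connected or non-bipartite, only dense.

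The core is a counting lemma in the spirit of Sudakov--Verstra\"ete: in a graph of minimum degree at least $ck/2$ and girth $g$ one can find two vertices $u,v$ and a family of $N=k^{\flo{(g-1)/2}}$ paths between them whose lengths form a block of consecutive even integers $\ell,\ell+2,\dots,\ell+2(N-1)$. I would build this from a branching system of shortest paths of depth $\flo{(g-1)/2}$ rooted in $H$, where the minimum degree furnishes $\gtrsim k$ choices at each branching and the girth forbids chords between divergent branches, so that the path lengths are honest and fill a full interval. This is precisely where the exponent $\flo{(g-1)/2}$ is generated, echoing the Moore-type growth forced by girth $g$.

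The decisive step is the parity shift, and this is where both hypotheses on $G$ pay off. By the fan form of Menger's theorem in the 2-connected graph $G$, there are two vertex-disjoint paths $A$ and $B$ joining $\{u,v\}$ to two distinct vertices $a,b$ of $C_0$. The vertices $a,b$ split $C_0$ into two arcs whose lengths sum to the odd number $|C_0|$, so exactly one arc is odd and one is even. Closing each path $P_i$ of the family by the fixed route $A$, one arc of $C_0$, and $B$ produces a cycle of length $|P_i|+s$, where $s=|A|+|B|+(\text{arc length})$ is a constant; choosing the arc of the appropriate parity makes $s$ odd, so every resulting cycle is odd. Since the $|P_i|$ run through $N$ consecutive even values, the closed cycles run through $N$ consecutive odd values, yielding the claimed $k^{\flo{(g-1)/2}}$ cycles of consecutive odd lengths.

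I expect the main obstacle to be the disjointness bookkeeping that makes this length accounting exact. The connecting paths $A,B$ and the cycle $C_0$ must avoid the interiors of the paths $P_i$, and the branching factor of the counting lemma must survive the deletion of the few vertices spent on the parity gadget; this is exactly where the absolute constant $c$ is consumed. I anticipate handling it by localizing the path system in a small ball of $H$ and routing $A,B$ out of that ball immediately, or by first reserving $C_0$ together with two attachment paths and only then running the counting lemma in what remains, so that the counting and parity steps interfere as little as possible.
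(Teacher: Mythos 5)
Your high-level architecture --- a dense part supplying many paths of consecutive even lengths, plus an odd cycle reached by two disjoint paths to shift parity --- matches the paper's strategy, and your parity-selection trick (choosing the arc of $C_0$ of the right parity) is exactly how the paper uses its odd cycle $D$. But both of your main steps have genuine gaps, and they sit precisely where the paper's actual work lies. The counting lemma you invoke is not established, and your sketch cannot establish it: a branching system of shortest paths of depth $\flo{(g-1)/2}$ in a graph of girth $g$ is exactly a tree (that is what the girth guarantees), and in a tree there is a \emph{unique} path between any two vertices, so this construction produces many vertices but only one path length between any fixed pair --- it cannot fill an interval of lengths. The machinery that does exist, and that the paper uses (the Sudakov--Verstra\"ete lemma giving a long cycle $C$ with chords, combined with Verstra\"ete's lemma on $(A,B)$-paths), produces paths of every length less than $|C|$ whose endpoints \emph{vary} over the two sides of a partition of $V(C)$; it does not give two fixed vertices $u,v$ joined by paths of consecutive even lengths. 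This difference is not cosmetic: the fixed-endpoint statement is substantially stronger, and the entire reason the paper introduces the BFS subtree $T'$ (whose leaves are exactly $V(C)\cap L_i$, all equidistant from its root $z$) is to make the closing connection have the \emph{same} length no matter which pair $(a,b)$ the varying path ends at. Your proposal assumes this difficulty away.

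Second, the disjointness issue you flag at the end is not bookkeeping but the crux, and neither of your proposed repairs works. Menger's theorem in the 2-connected graph $G$ gives two disjoint $(\{u,v\},C_0)$-paths, but nothing prevents them from running through the interiors of the paths $P_i$; you cannot delete the path system and reapply Menger, since 2-connectivity need not survive vertex deletion. Reserving $C_0$ and two attachment paths \emph{before} running the counting lemma is circular (the attachment paths must end at $u$ and $v$, which the counting lemma has not yet produced), and localizing the path system in a small ball fails because the system necessarily contains paths of length up to roughly $2k^{\flo{(g-1)/2}}$, while $C_0$, a shortest odd cycle, can itself be arbitrarily long (consider a blown-up long odd cycle), so reserving it can gut the dense part. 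The paper avoids all of this by building every ingredient inside one coherent structure: the varying-length paths live inside the single cycle $C$ found in one BFS level-pair $G_b[L_i\cup L_{i+1}]$; the parity connection is assembled from the BFS tree $T$, its subtree $T'$, and the fundamental odd cycle $D=T_{xy}\cup xy$; and an explicit case analysis (whether or not $V(D)$ meets $V(C)\cup V(T')-\{z\}$) guarantees that the pieces are internally disjoint, with the varying paths confined to $C$ and the fixed-length connection confined to $G-E(C)$ touching $V(C)$ only at its endpoints. Without an argument of this kind, your two steps do not compose into a proof.
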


\noindent We point out that the non-bipartite condition here is necessary and the 2-connectivity conditions can not be improved.
In view of the {\it Moore Bound}, our lower bound on the number of cycles is best possible (up to constant factor) for infinitely many integers $k$ when $g\le 8$ or $g=12$.
And more generally, the well-known conjecture that the minimal order of graphs with minimal degree $k$ and girth $g$ is $O(k^{\flo{(g-1)/2}})$
would imply that our result gives the correct order of magnitude for other values of $g$.

Let $G$ be a graph as in Theorem \ref{thm:main}. It is clear that there are at least $k$ cycles of consecutive odd lengths in $G$,
which assures that $G$ contains cycles of all odd lengths modulo $k$ (whenever $k$ is even or odd).
Together with the aforementioned result of Diwan on cycles of all even lengths modulo $k$,
we answer the question of Thomassen by improving the upper bound of $f(k)$ to a linear function by the following corollary.

\begin{coro}\label{coro:cycle_mod_k}
There exists an absolute constant $d>0$ such that for every natural number $k$,
every non-bipartite 2-connected graph with minimum degree at least $dk$ contains cycles of all lengths modulo $k$.
\end{coro}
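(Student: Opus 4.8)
The plan is to combine Theorem~\ref{thm:main} with Diwan's theorem (every graph with minimum degree at least $2k-1$ contains cycles of all even lengths modulo $k$, \cite{D10}) so that the two inputs between them cover every residue class of both parities. First I would let $c$ be the constant of Theorem~\ref{thm:main} and set $d:=\max\{c,2\}$. Given a non-bipartite $2$-connected graph $G$ with minimum degree at least $dk$, its average degree is at least $dk\ge ck$, so Theorem~\ref{thm:main} applies; since $G$ contains a cycle, its girth $g$ satisfies $g\ge 3$, hence $\flo{(g-1)/2}\ge 1$ and $G$ contains at least $k^{\flo{(g-1)/2}}\ge k$ cycles of consecutive odd lengths, say of lengths $\ell,\ell+2,\dots,\ell+2(k-1)$ with $\ell$ odd.

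The second step is a short residue computation showing these $k$ odd lengths already realize every odd residue modulo $k$, and I would split on the parity of $k$. If $k$ is odd, then $2$ is a unit modulo $k$, so $\{2i:0\le i\le k-1\}$ runs through all residues and the lengths $\ell+2i$ hit every residue class (not merely the odd ones); this is exactly the regime in which ``odd lengths modulo $k$'' and ``all lengths modulo $k$'' coincide, because every class then contains odd integers, so the proof is already complete. If $k$ is even, then $\{2i:0\le i\le k-1\}$ ranges over precisely the even residues $\{0,2,\dots,k-2\}$, so $\{\ell+2i\}$, with $\ell$ odd, ranges over precisely the odd residues $\{1,3,\dots,k-1\}$; thus the consecutive odd cycles supply all odd residue classes.

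It then remains only to handle the even residue classes when $k$ is even, and here I would directly invoke Diwan's theorem: since the minimum degree is at least $dk\ge 2k\ge 2k-1$, the graph $G$ contains cycles of all even lengths modulo $k$, realizing exactly the residues $\{0,2,\dots,k-2\}$. Taking the union with the odd residues obtained above yields cycles of all lengths modulo $k$. I do not expect a genuine obstacle in this corollary, since the argument is essentially bookkeeping; the only points needing care are verifying that the two external inputs jointly cover both parities for every $k$ and that the single threshold $d=\max\{c,2\}$ simultaneously meets the average-degree hypothesis of Theorem~\ref{thm:main} and the minimum-degree hypothesis of Diwan's theorem. All the real content is carried by Theorem~\ref{thm:main}.
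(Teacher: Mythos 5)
Your proposal is correct and follows essentially the same route as the paper: the $k$ cycles of consecutive odd lengths from Theorem~\ref{thm:main} cover all odd residues modulo $k$, and Diwan's theorem supplies the even lengths modulo $k$, with the minimum-degree hypothesis feeding both inputs. The only (harmless) difference is that you observe Diwan's result is not even needed when $k$ is odd, since then the consecutive odd lengths alone already cover every residue class.
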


\noindent This bound is sharp up to the constant factor: the complete graph $K_{k+1}$ contains cycles of all lengths but $2$ modulo $k$ and thus shows that $f(k)\ge k+1$.

\medskip

All graphs considered are simple and finite.
Let $G$ be a graph. We denote the number of vertices in $G$ by $|G|$, the vertex set by $V(G)$, the edge set by $E(G)$ and the minimum degree by $\delta(G)$.
If $S\subset V(G)$, then $G-S$ denotes the subgraph of $G$ obtained by deleting all vertices in $S$ (and all edges incident with some vertex in $S$).
If $S\subset E(G)$, then $G-S$ is obtained from $G$ by deleting all edges in $S$.
Let $A$ and $B$ be subsets of $V(G)$. An {\it $(A,B)$-path} in $G$ is a path with one endpoint in $A$ and the other in $B$.
If $A$ only contains one vertex $a$, then we simply write $(A,B)$-path as $(a,B)$-path.
We say a path $P$ is {\it internally disjoint} from $A$, if no vertex except the endpoints in $P$ is contained in $A$.

The rest of paper is organized as follows.
In next section, we establish Theorem \ref{thm:main} based on the approach of \cite{SV08}.
The last section contains some remakes and open problems.
We make no effort to optimize the constants in proofs and instead aim for simpler presentation.

\section{The proof}
Before processing, we state the following functional lemma from \cite{V00},
which will be applied multiple times and become essential in the proof of our main theorem.
\begin{lem} {\rm (Verstra\"ete \cite{V00})}
\label{lem:A-Bpaths}
Let $C$ be a cycle with a chord, and let $(A,B)$ be a nontrivial
partition of $V(C)$. Then $C$ contains $(A,B)$-paths of every length less than $|C|$, unless $C$ is
bipartite with bipartition $(A,B)$.
\end{lem}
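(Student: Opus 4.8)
The plan is to fix a target length $\ell$ with $1\le \ell\le |C|-1$ and produce an $(A,B)$-path of exactly that length, unless we are forced into the stated bipartite exception. Write $n=|C|$, label the cycle $v_0v_1\cdots v_{n-1}v_0$, let the chord join $v_0$ and $v_s$ with $2\le s\le n-2$, and let $\chi\colon V(C)\to\{A,B\}$ encode the partition. The starting point is that the cycle \emph{alone} contains a contiguous arc of every length $\ell\in\{1,\dots,n-1\}$, so the entire difficulty is to make the two endpoints receive different colours. A plain arc of length $\ell$ has endpoints $v_i,v_{i+\ell}$ (indices mod $n$); if some $i$ makes these opposite colours we are done. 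Otherwise $\chi(v_i)=\chi(v_{i+\ell})$ for all $i$, i.e. $\chi$ is invariant under the cyclic shift by $\ell$, which is equivalent to saying that the minimal period $d\mid n$ of $\chi$ divides $\ell$. Since both classes are nonempty we have $1\le d<n$, and every block of $d$ consecutive vertices already carries both colours.

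To treat such a ``bad'' length I would use the chord to reroute. Sliding the chord yields two families of length-$\ell$ paths that traverse the chord $v_0v_s$ in the middle,
\[
\mathcal{F}_1:\ v_a\,v_{a-1}\cdots v_0,\ v_s\,v_{s+1}\cdots v_{s+t},\qquad \mathcal{F}_2:\ v_{-t'}\cdots v_{n-1}\,v_0,\ v_s\,v_{s-1}\cdots v_{s-a'},
\]
with $a+t=a'+t'=\ell-1$, endpoints $(v_a,v_{s+t})$ and $(v_{-t'},v_{s-a'})$ respectively, and a direct count shows each family realises every $\ell\in\{1,\dots,n-1\}$. If no path of $\mathcal{F}_1$ is an $(A,B)$-path then $\chi(v_a)=\chi(v_{s+\ell-1-a})$ for all admissible $a$, which on the residues modulo $d$ (using $d\mid\ell$) reads as the reflection $\chi_r=\chi_{s-1-r}$; the analogous failure of $\mathcal{F}_2$ gives $\chi_r=\chi_{s+1-r}$. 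Composing the two reflections forces $\chi_r=\chi_{r+2}$ for all $r$, so shifting by $2$ preserves $\chi$, whence $d\mid 2$; as $d=1$ is monochromatic and excluded, $d=2$. Thus $\chi$ is the alternating $2$-colouring, $n$ is even, and revisiting the failure of $\mathcal{F}_1$ (a nontrivial reflection cannot fix a non-constant colouring of $\mathbb{Z}_2$) forces $s$ to be odd, so the chord too joins opposite colours. That is precisely the assertion that $C$ with its chord is bipartite with bipartition $(A,B)$, the allowed exception; and there every $A$--$B$ walk has odd length, so the even lengths are genuinely unrealizable, confirming the exception is necessary.

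The main obstacle is the index bookkeeping concealed in ``for all admissible $a$'': the composition-of-reflections step needs the sliding parameter to sweep a full residue system modulo $d$, which holds comfortably for middle-range lengths but can fail at the extremes $\ell$ near $1$ or $n-1$ and when the chord is short (small $s$). I would dispose of the extremes directly — $\ell=1$ and $\ell=n-1$ are immediate from any crossing edge of the cycle, which always exists since both colours occur — and handle the short-chord and small-$\ell$ corners by also invoking the shorter chord-paths confined to one of the two cycles created by the chord. Verifying that these auxiliary families, together with $\mathcal{F}_1$ and $\mathcal{F}_2$, leave no length uncovered outside the bipartite case is the one genuinely fiddly part; the rest is the clean rigidity dichotomy above.
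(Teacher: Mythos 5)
The paper itself contains no proof of this lemma — it is imported verbatim from Verstra\"ete \cite{V00} — so your argument stands or falls on its own. Its core is sound: for a length $\ell$ missed by plain arcs, the colouring $\chi$ must be invariant under the shift by $\ell$, hence its minimal period $d\ge 2$ divides both $n$ and $\ell$; and when the two reflections extracted from $\mathcal{F}_1$ and $\mathcal{F}_2$ hold on all of $\mathbb{Z}_d$, composing them forces $d=2$ and $s$ odd, which is exactly the bipartite exception. But the proof is incomplete precisely where you flag it, and the flagged case is a genuine case, not bookkeeping. The number of admissible values of the sliding parameter in $\mathcal{F}_1$ (or $\mathcal{F}_2$) is $\min(s,\ell,n-s,n-\ell)$; for a bad $\ell$ one automatically has $\ell\ge d$ and $n-\ell\ge d$ (so the ``extremes of $\ell$'' you worry about never occur among bad lengths), but nothing prevents $\min(s,n-s)<d$, and then your two families really do fail. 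Concretely, take $n=10$, chord $v_0v_2$, and $A=\{v_i:\ i\bmod 5\in\{0,1,2\}\}$, $B=V(C)\setminus A$: here $d=5$, the only bad length is $\ell=5$, and all four length-$5$ paths in $\mathcal{F}_1\cup\mathcal{F}_2$, namely $v_0v_2v_3v_4v_5v_6$, $v_1v_0v_2v_3v_4v_5$, $v_6v_7v_8v_9v_0v_2$ and $v_7v_8v_9v_0v_2v_1$, have both ends in $A$, even though this configuration is nowhere near the bipartite exception.

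What saves the lemma there is the family you mention only in passing and never analyse: chord-paths whose two cycle-segments lie on the same side of the chord, such as $v_9v_0v_2v_3v_4v_5$ above (ends $v_9\in B$, $v_5\in A$). Moreover, its analysis is not ``the same but fiddlier''; it rests on a different rigidity mechanism than your reflections. If every length-$\ell$ path of the form $v_{-t'}\cdots v_{n-1}v_0v_sv_{s+1}\cdots v_{s+t}$ with $t+t'=\ell-1$ (available whenever $\ell\le n-s$) fails, then $\chi_{t+1}=\chi_{t+s}$ for all $t=0,\dots,\ell-1$, a full residue sweep since $\ell\ge d$; hence $\chi$ is invariant under the \emph{shift} by $s-1$ modulo $d$, so $d\mid s-1$, which is absurd when $2\le s<d$. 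Symmetrically, if $n-s<d$, the paths with both segments on the other side of the chord give $d\mid s+1$, which together with $d\mid n$ and $0\le n-s-1<d$ forces $s=n-1$, again absurd. Adding these two computations would close your proof; without them, the case $\min(s,n-s)<d$ — which the example above shows cannot be absorbed into the reflection argument — is a real gap, not a deferred triviality.
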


\begin{proof}[Proof of Theorem \ref{thm:main}]
We shall show that it suffices to use $c=456$. Let $G$ be a non-bipartite 2-connected graph with average degree at least $456\cdot k$ and girth $g$.
Our goal is to show that $G$ contains $t:=k^{\flo{(g-1)/2}}$ cycles of consecutive odd lengths.
As it holds trivially when $k=1$, we assume that $k\ge 2$.

Let $G_b$ be a bipartite subgraph of $G$ with the maximum number of edges.
It is easy to see that $G_b$ is a connected spanning subgraph of $G$ with average degree at least $228k$.
Since $G$ is non-bipartite, there exists an edge $xy\in E(G)$ such that
both $x$ and $y$ lie in the same part of the bipartition of $G_b$.
Let $T$ be the breadth first search tree in $G_b$ with root $x$,
and let $L_i$ be the set of vertices of $T$ at distance $i$ from its root $x$ for $i\ge 0$ (so $L_0:=\{x\}$).
As $T$ is also a spanning tree, it follows that $V(G)=V(G_b)=\cup_{i\ge 0} L_i$.
For any two vertices $a$ and $b$ in the tree $T$, we denote $T_{ab}$ by the unique path in $T$ with endpoints $a$ and $b$.
By the choice of edge $xy$, clearly the vertex $y$ lies in $L_{2l}$ for some integer $l\ge 1$,
therefore the cycle $D:=T_{xy}\cup xy$ is an odd cycle in $G$.

By the definition of $T$, every edge of $G_b$ joints one vertex in $L_i$ to the other in $L_{i+1}$ for some $i\ge 0$.
Thus, we have
$$\sum_{i} e(L_i,L_{i+1})=e(G_b)\ge 114k\cdot |G_b|=114k\cdot \sum_i |L_i|\ge 57k\cdot \sum_{i}\left(|L_i|+|L_{i+1}|\right).$$
So there must exist some $i\ge 0$ such that the induced (bipartite) subgraph $G_i:=G_b[L_i\cup L_{i+1}]$ has average degree at least $114k$.
We now use the following lemma in \cite{SV} to find a long cycle with at least one chord (in fact with many chords) in $G_i$.

\vskip 2mm

\begin{lem} {\rm (Sudakov and Verstra\"ete \cite{SV})}
Let $G$ be a graph of average degree at least $12(d+1)$ and girth $g$. Then $G$ contains a cycle $C$ with at least $\frac{1}{3}d^{\flo{(g-1)/2}}$ vertices of degree at least $6(d+1)$, each of which has no neighbors in $G-V(C)$.
\end{lem}

\vskip 2mm

Note that $t=k^{\flo{(g-1)/2}}$. Since $114k\ge 12(9k+1)$ for $k\ge 2$ and the girth of $G_i$ is at least $g$, this lemma shows that there is a cycle $C$ with a chord in $G_i$, satisfying
\begin{equation}\label{equ:|C|}
|C|\ge \frac{1}{3}\cdot (9k)^{\flo{(g-1)/2}}\ge 2(t+1).
\end{equation}
We notice that $(V(C)\cap L_i, V(C)\cap L_{i+1})$ is the unique bipartition of $C$ (this is for the use of Lemma \ref{lem:A-Bpaths} later).

Let $T'$ be the minimal subtree of $T$ whose set of leaves is precisely $V(C)\cap L_i$, and let $z$ be the root of $T'$
(i.e., the one at the shortest distance from $x$). By the minimality of $T'$, $z$ has at least two branches of $T'$.
Let the depth of $T'$ (i.e., the distance between its root $z$ and its leaves) be $j$.

Recall that $D$ is an odd cycle in $G$ consisting of $T_{xy}$ and edge $xy$, where $y\in L_{2l}$.
Depending on whether $V(D)$ interests $V(C)\cup V(T')-\{z\}$ or not, we distinguish the following two cases.

\bigskip

{\bf Case 1.} $V(D)$ and $V(C)\cup V(T')-\{z\}$ are disjoint.

\vskip 1mm

In this case, the tree $T$ contains a path $Z$ from $z$ to $V(D)$, which is internally disjoint from $V(C)\cup V(T')$.
Note that $G$ is 2-connected, so there are two disjoint $(V(C)\cup V(T'), V(D))$-paths, say $P$ and $Q$, in $G$,
which are internally disjoint from $V(C)\cup V(T')\cup V(D)$.
Routing $P,Q$ through $Z$ if necessary, we may assume that $P$ is from $z$ to $p\in V(D)$,
and $Q$ is from $w\in V(C)\cup V(T')-\{z\}$ to $q\in V(D)-\{p\}$.

Base on the location of $w$, we divide the remainder of this case into two parts.
Let us first consider when $w\in V(T')-\{z\}$.
Let $A$ be the set of all leaves in the subtree of $T'$ with root $w$, and let $B:=V(C)-A$.
As $w\neq z$, we see $B^*:=V(C)\cap L_i-A$ is nonempty,
which shows that $(A,B)$ is not the bipartition of $C$.
By Lemma \ref{lem:A-Bpaths} and the equation \eqref{equ:|C|}, $C$ contains $(A,B)$-paths of all even lengths up to $2t+1$,
all of which in fact are $(A,B^*)$-paths because $C$ is bipartite.
To find $t$ cycles of consecutive odd lengths, now it is enough to show that for any $a\in A$ and $b\in B^*$,
there exists an $(a,b)$-path in $G-E(C)$ with a fixed odd length.
To see this, first note that paths $T'_{aw}$ and $T'_{bz}$ are disjoint and of fixed lengths;
since $D$ is an odd cycle, one can choose a $(p,q)$-path $R$ in $D$ (out of two choices) such that
the $(a,b)$-path $T'_{aw}\cup Q\cup R\cup P\cup T'_{zb}$ in $G-E(C)$ is of a fixed odd length.

Now we consider the situation when $w\in V(C)-V(T')$.
So $w\in V(C)\cap L_{i+1}$, and clearly $(\{w\},V(C)-\{w\})$ is not a bipartition of $C$.
By Lemma \ref{lem:A-Bpaths} and the equation \eqref{equ:|C|},
$C$ contains $(w,V(C)-\{w\})$-paths of all odd lengths up to $2t+1$,
all of which are $(w, V(C)\cap L_i)$-paths because $C$ is bipartite.
For any $u\in V(C)\cap L_i$, the length of path $T'_{uz}$ is fixed, that is the depth $j$ of $T'$.
Therefore, $C\cup T'$ contains $(z,w)$-paths of all lengths in $\{1+j,3+j,\ldots,2t+1+j\}$.
Since $D$ is an odd cycle, similarly as in the last paragraph one can choose a $(p,q)$-path $R$ in $D$ such that
the $(z,w)$-path $P\cup R\cup Q$ in $G-E(C\cup T')$ is of the same parity as $j$.
Putting the above paths together, we see that $G$ contains at least $t$ cycles of consecutive odd lengths.
This completes the proof of Case 1.

\bigskip

{\bf Case 2.} $V(D)$ intersects $V(C)\cup V(T')-\{z\}$.

\vskip 1mm

Let $w\in V(D)\cap \left(V(C)\cup V(T')-\{z\}\right)$ be the vertex such that $T_{wy}$ is the shortest path among all choices of $w$.
Note that now $T_{wz}=T'_{wz}$ is a subpath in the cycle $D$.

Again we distinguish on the location of $w$. First we consider when $w\in V(T')-\{z\}$.
Let $A$ be the set of all leaves in the subtree of $T'$ with root $w$, and $B^*:=V(C)\cap L_i-A$.
By the same proof as in the second paragraph of Case 1,
we conclude that $C$ contains $(A,B^*)$-paths of all even lengths up to $2t+1$.
We also notice that $D$ consists of two $(w,z)$-paths $T'_{wz}$ and $P:=D-T'_{wz}$, whose lengths are of opposite parities.
For any $a\in A$ and $b\in B^*$, $T'_{aw}\cup T'_{wz}\cup T'_{zb}$ forms an $(a,b)$-walk with a fixed even length,
that is twice of the depth of $T'$.
This suggests that $T'_{aw}\cup P\cup T'_{zb}$ is an $(a,b)$-path in $G-E(C)$ with a fixed odd length,
which, combining with these $(A,B^*)$-paths with lengths $2,4,\ldots, 2t$ in $C$, comprise $t$ cycles of consecutive odd lengths in $G$.

We are left with the case when $w\in V(C)-V(T')$. This shows that $w\in V(C)\cap L_{i+1}$ and $(\{w\}, V(C)-\{w\})$ is not the bipartition of $C$.
By Lemma \ref{lem:A-Bpaths} as well as the equation \eqref{equ:|C|}, $C$ contains $(w,V(C)-\{w\})$-paths of all odd lengths up to $2t+1$.
Similarly as the previous proofs, these odd paths are actually $(w,V(C)\cap L_i)$-paths.
Recall that the depth of $T'$ is $j$.
Therefore, $C\cup T'$ contains $(w,z)$-paths of all lengths in $\{1+j,3+j,\ldots, 2t+1+j\}$
and particularly the sub-path $T_{wz}$ of $D$ has length $1+j$. We know $D$ is an odd cycle,
so $D-T_{wz}$ is a $(w,z)$-path in $G-E(C\cup T')$ whose length is of the same parity as $j$.
Putting $D-T_{wz}$ and these $(w,z)$-paths in $C\cup T'$ together,
we find at least $t$ cycles of consecutive odd lengths.
This proves Case 2, finishing the proof of Theorem \ref{thm:main}.
\end{proof}

\section{Concluding remarks}
In \cite{BV} Bondy and Vince gave an infinite family of non-bipartite 2-connected with arbitrary large minimum degree but containing no two cycles whose lengths differ by one. This tells that Theorem \ref{thm:main} is sharp from another point of view.
The situation changes completely when the connectivity increases.
Fan \cite{Fan02} showed that every non-bipartite 3-connected graph with minimum degree at least $3k$ contains $2k$ cycles of consecutive lengths.
A conjecture of Dean (see \cite{DLS93}) also considered the connectivity and
asserted that every $k$-connected graph contains a cycle of length $0$ modulo $k$.
We observe that this is best possible for odd $k$ (if true), as the complete bipartite graph $K_{k-1,k-1}$ is $(k-1)$-connected but has no cycle of length $0$ modulo $k$.

Thomassen showed in \cite{Th83-girth} that graphs of minimum degree at least 3 and large girth share many properties with
graphs of large minimum degree. For example, he proved that if $G$ is a graph of minimum
degree at least 3 and girth at least $2(k^2 + 1)(3\cdot 2^{k^2+1} + (k^2 + 1)^2-1)$, then $G$ contains
cycles of all even lengths modulo $k$ (while we have seen the analogous result for graphs of large minimum degree in \cite{B77,Th83,Th88,D10}).
We have also seen that graphs of large minimum degree contain cycles of consecutive even lengths (e.g., results from \cite{SV08,V00}),
however to the best of our knowledge it is not known if there exists a natural number $g(k)$ such that
every graph of minimum degree at least 3 and girth at least $g(k)$ contains $k$ cycles of consecutive even lengths.
Similar question can be raised with respect to Theorem \ref{thm:main} as well.

In 1970s Erd\H{o}s and Simonovits \cite{ES} asked to determine the {\it chromatic profile}
$$
\delta_\chi(H,k):=\inf \{c: \delta(G)\ge c|G| \mbox{ and } H\not\subset G \Rightarrow \chi(G)\le k\}$$
for every graph $H$ (we refer interested readers to \cite{ABGKM} for related topics).
Since then, very little is known about $\delta_\chi(H,k)$ for graphs $H$ other than $K_3$.
Thomassen \cite{Th07} proved that for every $c>0$ and odd integer $l\ge 5$,
every $C_l$-free graph $G$ with minimum degree at least $c|G|$ has chromatic number $\chi(G)$ less than $(l+f(2l-8))/c$.
We conclude this paper with the following.

\begin{lem}
For arbitrary fixed odd integer $l\ge 5$, it holds that
$$\Theta\left(\frac{1}{(k+1)^{4(l+1)}}\right)\le \delta_\chi(C_l,k)\le \Theta\left(\frac{l}{k}\right).$$
\end{lem}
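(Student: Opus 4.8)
The statement bundles two essentially unrelated claims, and the plan is to prove them separately. The upper bound I would read off directly from Corollary~\ref{coro:cycle_mod_k} together with Thomassen's theorem. Recall from \cite{Th07} that every $C_l$-free graph $G$ with $\delta(G)\ge c|G|$ satisfies $\chi(G)<(l+f(2l-8))/c$; by Corollary~\ref{coro:cycle_mod_k} there is an absolute constant $d>0$ with $f(m)\le dm$, so $f(2l-8)\le 2dl$ and hence $\chi(G)<(1+2d)l/c$. Taking $c=(1+2d)l/(k+1)$ then forces $\chi(G)<k+1$, i.e. $\chi(G)\le k$, for every such $G$, which gives $\delta_\chi(C_l,k)\le(1+2d)l/(k+1)=\Theta(l/k)$. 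I expect this direction to be routine.

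For the lower bound it suffices to exhibit, for each large $k$, a $C_l$-free graph $\Gamma$ with $\chi(\Gamma)\ge k+1$ and density $\delta(\Gamma)/|\Gamma|$ at least the claimed amount, since then $\delta_\chi(C_l,k)\ge\delta(\Gamma)/|\Gamma|$. I would use the probabilistic deletion method: take $G\sim G(N,p)$ with $p=N^{-(l-1)/l}$ (so $Np=N^{1/l}$) and $N=\Theta_l\big(((k+1)\log(k+1))^l\big)$ chosen so that $N^{1/l}\ge 4(k+1)\log N$. With positive probability two standard estimates hold at once: the number of cycles of length at most $l$ is at most $\tfrac12N$ (its expectation is dominated by the length-$l$ term, which equals $O((Np)^l/l)=O(N/l)$, so Markov applies), and $\alpha(G)\le 2\log N/p\le N/(2(k+1))$. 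Deleting one vertex from each such short cycle produces $G'$ with $|G'|\ge\tfrac12N$ and girth exceeding $l$, hence $C_l$-free; as $\alpha(G')\le\alpha(G)$ this yields $\chi(G')\ge|G'|/\alpha(G')\ge k+1$.

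The hard part will be securing a large minimum degree, since the vertices deleted to destroy short cycles might all lie in a single neighbourhood, leaving no direct control of $\delta(G')$. I would bypass this by passing to a $(k+1)$-\emph{critical} subgraph $\Gamma\subseteq G'$, which exists because $\chi(G')\ge k+1$. Every vertex of a $(k+1)$-critical graph has degree at least $k$, so $\delta(\Gamma)\ge k$; moreover $\Gamma$ inherits girth $>l$ (so it remains $C_l$-free) and satisfies $|\Gamma|\le|G'|\le N$. Therefore
\[
\frac{\delta(\Gamma)}{|\Gamma|}\;\ge\;\frac{k}{N}\;\ge\;\frac{c_l}{(k+1)^{l}}\;\ge\;\frac{c_l}{(k+1)^{4(l+1)}}
\]
for a suitable $c_l>0$ and all large $k$, the exponent $4(l+1)$ being very wasteful (for the few small $k$ with $k+1<l$ one may instead take $\Gamma=K_{k+1}$, of density about $1$). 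Combining the two bounds would finish the proof.
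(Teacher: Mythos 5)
Your proposal is correct; the upper bound is exactly the paper's argument (Thomassen's theorem from \cite{Th07} plus Corollary \ref{coro:cycle_mod_k} to get $f(2l-8)=O(l)$), but your lower bound takes a genuinely different route. The paper does not build its high-girth, high-chromatic graph probabilistically: it cites Marshall's bound \cite{Mar} that $N(g,k)\le\Theta(k^{4g+1})$, takes a graph $G_0$ of \emph{minimum order} with girth at least $l+1$ and chromatic number at least $k+1$ (minimality forces $\chi(G_0)=k+1$ and $\delta(G_0)\ge k$ --- the same trick as your passage to a $(k+1)$-critical subgraph), and then blows up each vertex into an independent set of size $t$, getting arbitrarily large $C_l$-free graphs of density at least $k/N(l+1,k+1)$; this citation is where the exponent $4(l+1)$ in the statement comes from. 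Your deletion-method construction with $p=N^{-(l-1)/l}$ replaces the appeal to Marshall by a self-contained (if longer) probabilistic argument, produces a witness on only $N=\Theta_l\bigl(((k+1)\log(k+1))^l\bigr)$ vertices, and therefore proves the stronger bound $\delta_\chi(C_l,k)\ge c_l/(k+1)^{l}$ for large $k$, showing the stated exponent is far from optimal; it also makes clear that the blow-up is dispensable, since under the paper's definition of $\delta_\chi$ a single counterexample of density $c_0$ already rules out every $c\le c_0$ (the set of admissible $c$ is upward closed), whereas the paper's blow-up additionally provides counterexamples at every scale. Two details to tighten in your write-up: with $a=2\ln N/p$ the union bound $\mathbb{P}(\alpha(G)\ge a)\le\bigl(Ne^{-p(a-1)/2}\bigr)^a$ degenerates to the trivial bound $e^{pa/2}=N$, so take, say, $a=3\ln N/p$ and absorb the change into the constant defining $N$ (with the slack between exponent $l$ and $4(l+1)$ this costs nothing); and you should record the standard fact that any graph with $\chi\ge k+1$ contains a $(k+1)$-critical subgraph, every vertex of which has degree at least $k$.
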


\begin{proof}
The upper bound can be obtained easily by combining Thomassen's result \cite{Th07} and Corollary \ref{coro:cycle_mod_k}.
We turn to the lower bound. Let $N(g,k)$ be the minimum $|G|$ over all graphs $G$ with girth at least $g$ and chromatic number at least $k$.
The proof of \cite{Mar} shows that $N(g,k)\le \Theta(k^{4g+1})$ when $k\ge 144$.

It suffices to prove $\delta_\chi(C_l,k)> k/N(l+1,k+1)$.
Let $G_0$ be a graph of minimum order with girth at least $l+1$ and chromatic number at least $k+1$, i.e., $|G_0|= N(l+1,k+1)$,
then by the minimality it holds that $\chi(G_0)=k+1$ and $\delta(G_0)\ge k$.
We then construct a graph $G$ obtained from $G_0$ by replacing every vertex with an independent set of size $t$ and every edge with a complete bipartite graph.
Clearly $G$ contains no $C_l$ (in fact there is no odd cycle of length less than $l+1$ in $G$),
where $\chi(G)=\chi(G_0)>k$ and $\delta(G)=t\cdot\delta(G_0)\ge \frac{k|G|}{N(l+1,k+1)}$.
This completes the proof.
\end{proof}

\end{document}